\newtheorem{Theorem}{Theorem}[section]
\newtheorem{Lemma}[Theorem]{Lemma}
\begin{document}

\title{Sum-Product type Estimates over Finite Fields}

\author{Esen Aksoy Yazici}

\maketitle

\begin{abstract}
 Let $\mathbb{F}_q$ denote the finite field with $q$ elements where $q=p^l$ is a prime power. Using Fourier analytic tools with a third moment method, we obtain sum- product type estimates for subsets of $\mathbb{F}_q$.
 
  In particular, we prove that if $A\subset \mathbb{F}_q$, then

$$|AA+A|,|A(A+A)|\gg\min\left\{q,  \frac{|A|^2}{q^{\frac{1}{2}}}  \right\},$$

so that if $A\ge q^{\frac{3}{4}}$, then $|AA+A|,|A(A+A)|\gg q$.
 
\end{abstract}


\section{Introduction} 

Let $R$ be a ring. For a finite subset $A$ of $R$ we define the sum set and the product set of $A$ by
$$A+A=\{a+b:\; a, b \in A\}\; \textit{and}\; AA=\{a.b:\; a,b\in A\},$$
respectively.  It is  expected that, if $A$ is not subring of $R$, then either $|A+A|$ or $|A.A|$ is large compared to $|A|$.

In \cite{ES}, Erd\H{o}s and Szemeredi  proved that that there exists an absolute constant $\epsilon>0$ such that
$$\max\{|A+A|,|AA|\}\gg |A|^{1+\epsilon}$$ holds for any finite subset $A$ of $\mathbb{Z}$. They also conjectured that this bound should hold for any $\epsilon<1$. The best known bound in this direction is due to Shakan \cite[Theorem 1.2]{S} which states that  if $A$ is a finite subset of $\mathbb{R}$, then

$$|A+A|+|AA|\gg  |A|^{\frac{4}{3}+\frac{5}{5277}  }$$

The sum-product problem in the finite field context has been studied by various authors. In this setting, one generally works either on the small sets in terms of the characteristic $p$ of $\mathbb{F}_q$ or for sufficiently large subsets of $\mathbb{F}_q$ to guarantee that the set itself is in fact not a proper subfield of $\mathbb{F}_q$. We refer the reader \cite{AMRS,BKT, HH, MP, P, NRS, R0} and the references therein for an extensive exploration of the problem in this context.

In the present paper, we turn our attention to sum-product type estimates for the sets of the form $BA+C=\{ba+c: a\in A, b\in B, c\in C\}$ and $B(A+C)=\{b(a+c): a\in A, b\in B, c\in C\}$ where $A,B$, and $C$ are subsets of $\mathbb{F}_q$. To estimate a  lower bound for these sets, we first consider an additive energy which we relate with a third moment method. Then we employ a lemma from \cite{BHIPR} and prove the main result in the paper using the tools in  Fourier analysis.

\vskip.125in

\subsection{Preliminaries}
Let $f:\mathbb{F}_q\to \mathbb{C}.$ The Fourier transform of $f$ is defined as 

$$\widehat{f}(m)=q^{-1}\sum_{x\in \mathbb{F}_q}\chi(-xm)f(x)$$
where $\chi(z)=e^{\frac{2\pi iz}{q}}$.
We will use the orthogonality relation 
\begin{equation*} 
  \sum_{x\in \mathbb {F}_{q}}\chi(xs) = \left\{
    \begin{array}{rl}
      q, & \text{if } s=0\\
      0, & otherwise
      \end{array} \right.  
\end{equation*}

and Plancherel identity

$$\sum_{m\in \mathbb{F}_q}|\widehat{f}(m)|^2=q^{-1}\sum_{x\in \mathbb{F}_q}|f(x)|^2. $$

The main result of the paper is the following theorem.

\begin{Theorem}\label{main}
If $A, B,C \subset \mathbb{F}_q$, then 

$$|BA+C|,|B(A+C)|\gg \min \left\{q, \frac{|B|^{\frac{1}{2}} |C|^{\frac{1}{2}} |A|}    {q^{\frac{1}{2}}}  \right\}$$

In particular, taking $A=B=C,$ we have

$$|AA+A|,|A(A+A)|\gg\min\left\{q,  \frac{|A|^2}{q^{\frac{1}{2}}}  \right\}.$$

so that if $A\ge q^{\frac{3}{4}}$, then $|AA+A|,|A(A+A)|\gg q$.

\end{Theorem}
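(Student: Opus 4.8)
The plan is to reduce the lower bound on $|BA+C|$ to an \emph{upper} bound on a third moment, and then to estimate that third moment by Fourier analysis together with the bilinear–equation count of \cite{BHIPR}. Set
$$\lambda(x)=\#\{(a,b,c)\in A\times B\times C:\ ba+c=x\},$$
so that $\sum_x\lambda(x)=|A||B||C|$ and $\lambda$ is supported on $BA+C$. Two applications of Cauchy–Schwarz give $|BA+C|\geq(\sum_x\lambda(x))^2/\sum_x\lambda(x)^2$ and $(\sum_x\lambda(x)^2)^2\leq(\sum_x\lambda(x))(\sum_x\lambda(x)^3)$, hence $|BA+C|^2\geq(|A||B||C|)^3/\sum_x\lambda(x)^3$; this is the "additive energy related to a third moment" step. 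It therefore suffices to prove that
$$\sum_x\lambda(x)^3\ \ll\ \frac{(|A||B||C|)^3}{q^2}\ +\ q\,|A|\,|B|^2\,|C|^2,$$
since feeding this back gives $|BA+C|\gg\min\{q,\ |B|^{1/2}|C|^{1/2}|A|\,q^{-1/2}\}$. Before this I would clear away degenerate cases: since $BA+C$ contains translates of $b_0A$, $a_0B$ and $a_0b_0+C$, one has $|BA+C|\geq\max\{|A|,|B|,|C|\}$, so the conclusion is immediate whenever $\max\{|A|,|B|,|C|\}$ is of order $q$; and since deleting $0$ from each set changes the target bound by at most a bounded factor, I may assume $0\notin A\cup B\cup C$.

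For the third moment I would expand by Fourier inversion. Writing $G(m)=\sum_{a\in A,\,b\in B}\chi(mab)$ and $H(m)=\sum_{c\in C}\chi(mc)$, one has $\lambda(x)=q^{-1}\sum_m\chi(-mx)G(m)H(m)$, and orthogonality gives
$$\sum_x\lambda(x)^3\ =\ q^{-2}\!\!\sum_{m_1+m_2+m_3=0}\!\! G(m_1)H(m_1)\,G(m_2)H(m_2)\,G(m_3)H(m_3).$$
The term $m_1=m_2=m_3=0$ is the main term $(|A||B||C|)^3/q^2$, so everything reduces to the error sum $\mathcal E$ over $(m_1,m_2,m_3)\neq(0,0,0)$. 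I would split $\mathcal E$ according to how many of the $m_i$ vanish. The part with exactly one $m_i=0$ collapses, using $G(-m)=\overline{G(m)}$ and $H(-m)=\overline{H(m)}$, to $3q^{-2}|A||B||C|\sum_{m\neq 0}|G(m)|^2|H(m)|^2$, which I would bound by Plancherel ($\sum_m|H(m)|^2=q|C|$) together with the elementary estimate $\max_{m\neq 0}|G(m)|\ll (q|A||B|)^{1/2}$ — obtained from $|G(m)|\leq\sum_{a\in A}|\sum_{b\in B}\chi(mab)|$, Cauchy–Schwarz in $a$, and Plancherel for $B$ — giving a contribution $\ll|A|^2|B|^2|C|^2$, which is acceptable.

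The essential term of $\mathcal E$ is the one with all three $m_i$ nonzero; equivalently, writing $\rho=\lambda-|A||B||C|/q$ for the fluctuation of $\lambda$, this term equals $\sum_v\rho(v)^3$. Here the crude triangle–inequality bound (via $\|\lambda\|_\infty$, or via the largest fibre of $(a,b)\mapsto ab$) is too weak, and genuine cancellation must be used. The structural fact to exploit is that
$$\sum_x\lambda(x)^3\ =\ \sum_{t_1,t_2}r_C(t_1,t_2)\,Q(t_1,t_2),\qquad r_C(t_1,t_2)=\#\{c\in C:\ c+t_1,\ c+t_2\in C\},$$
$$Q(t_1,t_2)=\#\{a_1b_1-a_2b_2=t_1,\ a_1b_1-a_3b_3=t_2\ :\ a_i\in A,\ b_i\in B\},$$
and that $\#\{a_1b_1-a_2b_2=t\ :\ a_i\in A,\ b_i\in B\}$ for $t\neq 0$ is exactly $\#\{\vec x\cdot\vec y=t\ :\ \vec x\in A\times A,\ \vec y\in B\times(-B)\subseteq\mathbb{F}_q^{2}\}$. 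The lemma from \cite{BHIPR} gives $\#\{\vec x\cdot\vec y=t\}=q^{-1}|A|^2|B|^2+O(q^{1/2}|A||B|)$ for $t\neq 0$, while the $t=0$ contribution — the multiplicative energy of $A$ and $B$ — is controlled by orthogonality over the multiplicative characters of $\mathbb{F}_q^{*}$. Substituting these, together with the bound on $\max_{m\neq 0}|G(m)|$ and Plancherel for $H$, should keep $\sum_v\rho(v)^3\ll q|A||B|^2|C|^2$, which yields the required upper bound on $\sum_x\lambda(x)^3$ and hence the bound on $|BA+C|$. The estimate for $|B(A+C)|$ follows by the same three steps applied to the product set $B\cdot(A+C)$, and taking $A=B=C$ gives $|AA+A|,|A(A+A)|\gg\min\{q,|A|^2q^{-1/2}\}$, so the bound is $\gg q$ once $|A|\geq q^{3/4}$. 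The step I expect to cost the most care is precisely this last error term: one must extract enough saving from the \cite{BHIPR} lemma and the multiplicative–energy estimate to keep the all–nonzero portion of $\mathcal E$ below $q|A||B|^2|C|^2$ uniformly in the sizes of $A,B,C$.
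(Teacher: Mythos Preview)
Your reduction to bounding the third moment $\sum_x\lambda(x)^3$ and the target bound
\[
\sum_x\lambda(x)^3\ \ll\ \frac{(|A||B||C|)^3}{q^2}\ +\ q\,|A|\,|B|^2\,|C|^2
\]
are exactly what the paper proves, and your treatment of the ``one zero frequency'' terms is fine. The gap is in the ``all nonzero'' term, and it stems from a misidentification of which lemma from \cite{BHIPR} is being used and a misdiagnosis of what is too weak.

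The lemma the paper invokes from \cite{BHIPR} is \emph{not} a dot-product count in $\mathbb{F}_q^2$; it is the elementary moment inequality
\[
\sum_z f(z)^3\ \le\ \frac{\|f\|_1^3}{q^2}\ +\ 3\,\|f\|_\infty\sum_z\Bigl(f(z)-\tfrac{\|f\|_1}{q}\Bigr)^2,
\]
which in your notation says precisely $\sum_v\rho(v)^3\le 3\|\lambda\|_\infty\sum_v\rho(v)^2$ (up to the harmless shift by the mean). You explicitly reject this route, writing that ``the crude triangle-inequality bound (via $\|\lambda\|_\infty$\dots) is too weak''. It is not. The correct supremum bound is $\|\lambda\|_\infty\le |B||C|$: once $(b,c)\in B\times C$ is fixed (with $b\ne 0$), the equation $ba+c=x$ determines $a$. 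Combining this with the variance estimate you already have,
\[
\sum_v\rho(v)^2\ =\ q^{-1}\sum_{m\ne 0}|G(m)|^2|H(m)|^2\ \le\ q^{-1}\cdot q|A||B|\cdot\sum_m|H(m)|^2\ =\ q|A||B||C|,
\]
gives $\bigl|\sum_v\rho(v)^3\bigr|\le |B||C|\cdot q|A||B||C|=q|A||B|^2|C|^2$ immediately. That is the whole argument in the paper; no dot-product asymptotics, no multiplicative-character decomposition of the $t=0$ fibre, and no analysis of $Q(t_1,t_2)$ are needed.

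So the proposal is incomplete at exactly the step you flag as delicate, but for the wrong reason: the machinery you propose there is both unfinished and unnecessary, while the one-line bound you discarded is what closes the argument. Replace your final paragraph by the observation $\|\lambda\|_\infty\le|B||C|$ and the variance computation above, and the proof is complete and coincides with the paper's.
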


\subsection{Proof of Theorem \ref{main}}

Let $A\subset \mathbb{F}_q$ and $P$ be a set of points in  $\mathbb{F}_q^2\setminus \{(0,0)\}.$ Define the set of lines pinned at $P$ as
$$L=L_P=\{l_{m,b}:(m,b)\in P\}$$
and also the image set of lines in $L$ as
$$L(A)=L_P(A)=\{l_{m,b}(a)=ma+b:(m,b)\in P, a\in A\}.$$

Similar to energy notion given in \cite{AMRS}, define

$$E_3(L,A)=|\{m_1a_1+b_1=m_2a_2+b_2=m_3a_3+b_3 \}|$$
where $(m_1,b_1),(m_2,b_2),(m_3,b_3)\in P, a_i\in A.$\\

\begin{Lemma}\label{relation} With the notation above we have
$$\frac{|L|^3|A|^3}{|L(A)|^2}\le E_3(L,A).$$
\end{Lemma}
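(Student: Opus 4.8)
The plan is to pass to the representation function of the image set and then apply Hölder's inequality (a single application does everything). For $t\in\mathbb{F}_q$ put
$$r(t)=\#\{(m,b,a):(m,b)\in P,\ a\in A,\ ma+b=t\},$$
the number of ways $t$ is realized as a value $l_{m,b}(a)$. First I would record two elementary identities. Since distinct pairs $(m,b)\in P$ index distinct lines $l_{m,b}$, we have $|L|=|P|$, and summing $r$ over all $t$ counts each pair $\bigl((m,b),a\bigr)\in P\times A$ exactly once, so $\sum_{t}r(t)=|P|\,|A|=|L|\,|A|$. Also $r$ is supported on $L(A)$, and grouping the nine-tuples counted by $E_3(L,A)$ according to their common value $t=m_1a_1+b_1=m_2a_2+b_2=m_3a_3+b_3$ gives $E_3(L,A)=\sum_{t\in L(A)}r(t)^3$.

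Next I would apply Hölder's inequality with exponents $3$ and $\tfrac32$ to $\sum_{t\in L(A)}r(t)\cdot 1$:
$$|L|\,|A|=\sum_{t\in L(A)}r(t)\ \le\ \left(\sum_{t\in L(A)}r(t)^3\right)^{1/3}\left(\sum_{t\in L(A)}1\right)^{2/3}=E_3(L,A)^{1/3}\,|L(A)|^{2/3}.$$
Cubing both sides and rearranging yields $\dfrac{|L|^3\,|A|^3}{|L(A)|^2}\le E_3(L,A)$, which is the claim.

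There is essentially no serious obstacle here: the only points needing a moment's care are the bookkeeping that gives $\sum_t r(t)=|L|\,|A|$ (which is where $|L|=|P|$ is used) and the correct identification of $E_3(L,A)$ with the third moment $\sum_t r(t)^3$. One could alternatively run the same estimate through two successive applications of the Cauchy--Schwarz inequality, but the single Hölder step is the most economical route and is the one I would present.
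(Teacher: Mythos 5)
Your proof is correct and is essentially identical to the paper's: the same representation function $r$, the same identities $\sum_t r(t)=|L||A|$ and $E_3(L,A)=\sum_t r(t)^3$, and the same single application of H\"older's inequality with exponents $3$ and $\tfrac32$ over the support $L(A)$. Nothing further is needed.
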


\begin{proof}
Let $r(x)=r_{L(A)}(x)=|\{((m,b),a)\in P\times A: x=ma+b \}|$. Then, by H\"older inequality,
\begin{eqnarray*}
|L||A|=\sum r(x)&\le& (\sum r(x)^3)^{\frac{1}{3}}(\sum 1^{\frac{3}{2}})^{\frac{2}{3}}\\
&\le& E_3(L,A)^{\frac{1}{3}}L(A)^{\frac{2}{3}} \\ 
 \frac{|L|^3|A|^3}{|L(A)|^2}&\le& E_3(L,A)
 \end{eqnarray*}

\end{proof}

We need the following lemma from \cite{BHIPR}.

\begin{Lemma}\emph{\cite[Lemma 2.1]{BHIPR}}\label{sum}

$F$ a finite space, $f: F\to R$.

$$\sum_{z\in F}f^{n}(z)\le |F|\left(\frac{||f||_1}{|F|}\right)^n+\frac{n(n-1)}{2}||f||_{\infty} ^{n-2}\sum_{z\in F}\left( f(z)-\frac{||f||_1}{|F|}  \right)^2 $$

where $||f||_1=\sum_{z\in F}|f(z)|$, $||f||_{\infty}=\max_{z\in F}f(z)$.
\end{Lemma}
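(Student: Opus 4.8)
The plan is to prove the inequality by a pointwise second-order Taylor estimate for the power function, summed over $F$. Throughout I take $n\ge 2$ to be an integer and I assume $f\ge 0$ (the application in Lemma~\ref{relation} is to the nonnegative counting function $r$, and only for $f\ge 0$ does $\|f\|_1=\sum_{z}f(z)$, which is what makes the linear term below vanish). Write $\mu=\|f\|_1/|F|$ for the mean of $f$, and note that $0\le \mu\le \|f\|_\infty$, since an average of nonnegative numbers never exceeds their maximum.

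The key step is the elementary one-variable inequality
$$ t^{n}\ \le\ \mu^{n}+n\mu^{n-1}(t-\mu)+\frac{n(n-1)}{2}\,\|f\|_\infty^{\,n-2}\,(t-\mu)^2 \qquad\text{for all } t\in[0,\|f\|_\infty]. $$
To see this, I would apply Taylor's formula with integral remainder to $\phi(t)=t^{n}$ about $t=\mu$, namely $\phi(t)=\phi(\mu)+\phi'(\mu)(t-\mu)+\int_{\mu}^{t}\phi''(s)(t-s)\,ds$, with $\phi'(t)=nt^{n-1}$ and $\phi''(s)=n(n-1)s^{n-2}\ge 0$. For every $s$ lying between $\mu$ and $t$ one has $0\le s\le \|f\|_\infty$, hence $\phi''(s)\le n(n-1)\|f\|_\infty^{n-2}$; bounding the nonnegative integrand by this constant and computing $\int(t-s)\,ds=\tfrac12(t-\mu)^2$ (the cases $t\ge\mu$ and $t<\mu$ are handled identically after flipping the limits of integration) yields the displayed remainder bound.

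Next I would substitute $t=f(z)$ for each $z\in F$ — legitimate since $0\le f(z)\le\|f\|_\infty$ — and sum over $z$. The constant term contributes $|F|\mu^{n}=|F|(\|f\|_1/|F|)^{n}$. The linear term contributes $n\mu^{n-1}\sum_{z}(f(z)-\mu)=n\mu^{n-1}\big(\|f\|_1-|F|\mu\big)=0$, which is exactly where the hypothesis $f\ge 0$ is used so that $\sum_{z}f(z)=\|f\|_1$. The quadratic term contributes at most $\frac{n(n-1)}{2}\|f\|_\infty^{n-2}\sum_{z}(f(z)-\mu)^2$. Adding the three contributions gives precisely the claimed bound.

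I do not expect a serious obstacle; this is a short convexity/Taylor argument. The only points needing care are (i) recording the hypothesis $f\ge 0$, without which the linear term need not vanish, since $\|f\|_1$ is the $\ell^1$ norm rather than $\sum_{z}f(z)$; and (ii) the uniform bound $\phi''(s)\le n(n-1)\|f\|_\infty^{n-2}$, which requires $s\le\|f\|_\infty$ along the whole segment joining $\mu$ and $f(z)$ — this is guaranteed by $\mu\le\|f\|_\infty$. The boundary case $n=2$ is a useful consistency check: there the inequality reduces to the exact variance identity $\sum_{z}f(z)^2=|F|\mu^2+\sum_{z}(f(z)-\mu)^2$.
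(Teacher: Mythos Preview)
Your argument is correct: the second-order Taylor bound for $t\mapsto t^n$ on $[0,\|f\|_\infty]$, with the second derivative majorised by $n(n-1)\|f\|_\infty^{\,n-2}$, gives exactly the displayed inequality after summing and using $\sum_z f(z)=\|f\|_1$ to kill the linear term. The paper does not supply a proof of this lemma at all; it simply quotes it as \cite[Lemma~2.1]{BHIPR}, so there is no in-paper argument to compare against.
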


\begin{Theorem}\label{la} Let $L=L_P$ where $P\cong B\times C$.
$$L(A)\gg \min\left\{q,  \frac{|L|^{\frac{1}{2}}|A|  }{q^{\frac{1}{2}}}  \right\}.$$
\end{Theorem}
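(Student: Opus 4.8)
The plan is to feed an upper bound for $E_3(L,A)$ into Lemma \ref{relation}. That lemma gives $|L(A)|^2\ge |L|^3|A|^3/E_3(L,A)$, so it suffices to prove
\[
E_3(L,A)\ \ll\ \frac{|L|^3|A|^3}{q^2}+q|A||L|^2 .
\]
Once this is in hand, one splits into two cases according to which term on the right dominates: if the first term dominates then $E_3(L,A)\ll |L|^3|A|^3/q^2$ and Lemma \ref{relation} yields $|L(A)|^2\gg q^2$, i.e. $|L(A)|\gg q$; if the second term dominates then $E_3(L,A)\ll q|A||L|^2$ and Lemma \ref{relation} yields $|L(A)|^2\gg |L||A|^2/q$, i.e. $|L(A)|\gg |L|^{1/2}|A|/q^{1/2}$. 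Together these give the claimed bound $|L(A)|\gg\min\{q,\ |L|^{1/2}|A|/q^{1/2}\}$.

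To bound $E_3(L,A)$, let $r(x)=r_{L(A)}(x)=|\{((m,b),a)\in P\times A:\ x=ma+b\}|$ as in Lemma \ref{relation}, so that $E_3(L,A)=\sum_{x\in\mathbb F_q}r(x)^3$, and apply Lemma \ref{sum} with $F=\mathbb F_q$, $f=r$, $n=3$. Two of the three inputs are immediate: $\|r\|_1=\sum_x r(x)=|P||A|=|L||A|$, and (using that each line $l_{m,b}$ with $(m,b)\in P$ has $m\neq 0$ — this is the standing nondegeneracy of $P$, without which $L(A)$ could be as small as $|C|$ irrespective of $A$) each fixed line attains any prescribed value at most once, so $\|r\|_\infty\le |L|$. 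The third input is the variance term
\[
\sum_{x\in\mathbb F_q}\Bigl(r(x)-\tfrac{\|r\|_1}{q}\Bigr)^2=\sum_{x}r(x)^2-\frac{|L|^2|A|^2}{q}=E_2(L,A)-\frac{|L|^2|A|^2}{q},
\]
where $E_2(L,A)=\sum_x r(x)^2$ counts quadruples $((m,b),a,(m',b'),a')\in(P\times A)^2$ with $ma+b=m'a'+b'$.

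The Fourier step is the estimation of $E_2(L,A)$, and this is where the hypothesis $P\cong B\times C$ enters. By the orthogonality relation and the factorization $\sum_{(m,b)\in P,\,a\in A}\chi(t(ma+b))=S_C(t)\,T(t)$, where $S_C(t)=\sum_{c\in C}\chi(tc)$ and $T(t)=\sum_{m\in B,\,a\in A}\chi(tma)$, one gets $E_2(L,A)=q^{-1}\sum_{t\in\mathbb F_q}|S_C(t)|^2|T(t)|^2$. The $t=0$ term equals $|L|^2|A|^2/q$ exactly, so it cancels in the variance term, and it remains to control $q^{-1}\sum_{t\neq 0}|S_C(t)|^2|T(t)|^2$. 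For $t\neq 0$, writing $T(t)=\sum_{a\in A}S_B(ta)$ and applying Cauchy--Schwarz in $a$ followed by the substitution $s=ta$ and Plancherel gives $|T(t)|^2\le |A|\sum_{a\in\mathbb F_q}|S_B(ta)|^2=|A|\sum_{s}|S_B(s)|^2=q|A||B|$; combined with $\sum_{t\neq0}|S_C(t)|^2\le\sum_{t}|S_C(t)|^2=q|C|$ this gives
\[
\sum_{x\in\mathbb F_q}\Bigl(r(x)-\tfrac{\|r\|_1}{q}\Bigr)^2\ \le\ q|A||B||C|\ =\ q|A||L|.
\]
Feeding $\|r\|_1=|L||A|$, $\|r\|_\infty\le|L|$, and this variance bound into Lemma \ref{sum} yields $E_3(L,A)\le |L|^3|A|^3/q^2+3q|A||L|^2$, and the case analysis above completes the proof.

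I expect the delicate point to be the estimate $\|r\|_\infty\le|L|$: it must be obtained by counting through the line set $P\cong B\times C$ (each non‑constant line hits each value once) rather than through $A$, since only this exponent makes the second case come out with the exponent $|L|^{1/2}|A|/q^{1/2}$ claimed in Theorem \ref{la}; and it is what makes the nondegeneracy of $P$ (no horizontal lines, i.e. $0\notin B$) genuinely necessary — harmless in the deductions toward Theorem \ref{main}, but worth flagging. The Fourier estimate for $\sum_{t\neq0}|S_C(t)|^2|T(t)|^2$ is routine once one exploits the product structure, the only care being to route the Cauchy--Schwarz and Plancherel applications so that no spectral quantity larger than $q|A||B|$ or $q|C|$ appears.
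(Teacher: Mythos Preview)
Your proof is correct and follows essentially the same route as the paper: apply Lemma~\ref{sum} with $n=3$, $\|r\|_1=|L||A|$, $\|r\|_\infty\le|L|$, bound the variance term via Cauchy--Schwarz and Plancherel using the product structure $P\cong B\times C$ to obtain $E_3(L,A)\le |L|^3|A|^3/q^2+3q|A||L|^2$, and then conclude via Lemma~\ref{relation}. The only cosmetic difference is that the paper computes $\widehat{r}(\xi)$ explicitly and applies Cauchy--Schwarz in the $B$-variable, whereas you express $E_2(L,A)$ directly as a character sum and Cauchy--Schwarz in the $A$-variable; both routes give the same pointwise bound $|T(t)|^2\le q|A||B|$ and hence the same variance estimate.
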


\begin{proof}

Now let $$f(z):=r_{L(A)}(z)=|\{z=a_1a_2+a_3: (a_1,a_3)\in P, a_2\in A\}|$$
Then by taking $n=3$, $F=\mathbb{F}_q$  in Lemma \ref{sum}, we have

\begin{eqnarray*}
E_3(L,A)=\sum_{z}f(z)^3\leq\frac{||f||_1^3}{q^2}+3||f||_{\infty}\sum_{z\in F}\left(f(z)-\frac{||f||_1}{q}\right)^2.
\end{eqnarray*}

Note that $||f||_1=\sum f(z)=|L||A| $, $||f||_{\infty}=sup_{z}f(z)\le|L|$, since when we fix $(a_1,a_3)$ in $f(z)$ then $a_2$ is uniquely determined.

Therefore,

\begin{eqnarray}\label{plancherel}
E_3(L,A)&\leq&\frac{|L|^3|A|^3}{q^2}+3|L|\sum_{z} (f(z)-\frac{|L||A|}{q})^2\nonumber\\
&=&\frac{|L|^3|A|^3}{q^2}+3|L|q\sum_{\xi\ne 0}| \widehat{f(\xi)}|^2
\end{eqnarray}
where we used  the Plancherel in the last equality.

We can write 
\begin{eqnarray*}
f(z)&=&|\{z=a_1a_2+a_3: (a_1,a_3)\in P=B\times C, a_2\in A\}|\\
&=& q^{-1}\sum_{s, a_1,a_2,a_3}\chi ((z-(a_1a_2+a_3))s)B(a_1)C(a_3)A(a_2)\\
&=&q^{-1}\sum_{s, a_1,a_2,a_3}\chi(zs-a_1a_2s)\chi(-a_3s)C(a_3)B(a_1)A(a_2)\\
&=&\sum_{s, a_1,a_2}\chi(zs-a_1a_2s)\widehat{C}(s)B(a_1)A(a_2)
\end{eqnarray*}

It follows that
\begin{eqnarray*}
\widehat{f}(\xi)&=&q^{-1}\sum_{z}\chi(-z.\xi)f(z)\\
&=&q^{-1}\sum_{z}\chi(-z.\xi)\sum_{s, a_1,a_2}\chi(zs-a_1a_2s)\widehat{C}(s)B(a_1)A(a_2)\\
&=&q^{-1}\sum_{s, a_1,a_2}\chi(-a_1a_2s)\widehat{C}(s)B(a_1)A(a_2)\sum_{z\in \mathbb{F}_q}\chi(z(s-\xi))\\
&=&\sum_{a_1,a_2}\chi(-a_1a_2\xi)\widehat{C}(\xi)B(a_1)A(a_2).
\end{eqnarray*}

Therefore,
\begin{eqnarray*}
|\widehat{f}(\xi)|\le \sum_{a_1\in B}|\sum_{a_2\in A}\chi(-a_1a_2\xi)\widehat{C}(\xi)|
\end{eqnarray*}

By the Cauchy-Schwarz inequality, for $\xi\ne 0$,

\begin{eqnarray*}
|\widehat{f}(\xi)|^2&\leq&|B|\sum_{a_1\in \mathbb{F}_q}\sum_{a_2,a_2'\in A}\chi(-a_1a_2\xi)\widehat{C}(\xi)\chi(a_1a_2'\xi)\overline{\widehat{C}(\xi)}\\
&=&|B|\sum_{a_1\in \mathbb{F}_q} \sum_{a_2,a_2'\in A} \chi (\xi a_1(a_2'-a_2) )|\widehat{C}(\xi)|^2\\
&\le&|B|\sum_{a_1\in \mathbb{F}_q} \sum_{a_2,a_2'\in A} \chi (\xi a_1(a_2'-a_2) )|\widehat{C}(\xi)|^2\\
&=&|B|q\sum_{\substack{a_2,a_2'\in A\\ \xi(a_2'-a_2)=0}}|\widehat{C}(\xi)|^2\\
&=&|B|q\sum_{a_2'=a_2\in A}|\widehat{C}(\xi)|^2\\
&=&|B|q|A||\widehat{C}(\xi)|^2
\end{eqnarray*}

It follows that
\begin{eqnarray*}
\sum_{\xi\ne 0}|\widehat{f}(\xi)|^2&\leq&|B|q|A|\sum_{\xi\ne 0}|\widehat{C}(\xi)|^2\\
&\leq&|B|q|A|q^{-1}\sum_{x}|C(x)|^2\\
&=&|B||A||C|\\
&=&|L||A|
\end{eqnarray*}

Plugging the last value in (\ref{plancherel}) and using Lemma \ref{relation} we have
$$\frac{|L|^{3}|A|^3}{|L(A)|^2}\leq E_{3}(L,A)\leq \frac{|L|^3|A|^3}{q^2}+3|L|^2|A|q $$

Therefore

$$L(A)\gg \min\left\{q,  \frac{|L|^{\frac{1}{2}}|A|  }{q^{\frac{1}{2}}}  \right\}.$$

\end{proof}

\begin{proof}[Proof of Theorem \ref{main}]
Note that the set $BA+C=L_P(A)$ where  $P=B\times C$. Hence, taking $|L|=|B||C|$ in Theorem \ref{la}, it follows that
$$|BA+C|\gg \min \left\{q, \frac{|B|^{\frac{1}{2}} |C|^{\frac{1}{2}} |A|}    {q^{\frac{1}{2}}}  \right\}$$
Note that $B(A+C)= L_P(A)$ where  $P\cong B\times C$, so the same argument applies.
\end{proof}

\vskip.125in

\subsection*{Acknowledgements}

The author  would like to thank Simon Macourt, Oliver Roche-Newton, Alex Iosevich, Jonathan Pakianathan and Ilya Shkredov for their valuable comments. The author is supported by T\"UB\.ITAK-B\.IDEB 2218 Postdoctoral Research Fellowship.




 \end{document}